\documentclass[11pt]{amsart}
\usepackage{amsmath, amssymb, amsthm, geometry, color, enumerate}
\usepackage{url, hyperref}

\def\Z {{\mathbb{Z}}}
\def\R {{\mathbb{R}}}
\def\C {{\mathbb{C}}}
\def\Q {{\mathbb{Q}}}
\def\A {{\mathcal{A}}}
\def\B {{\mathcal{B}}}
\def\D {{\mathcal{D}}}
\def\del{\partial}
\def\delA{\del_{\A}}
\def\delB{\del_{\B}}

\newtheorem{thm}{Theorem}

\newtheorem{theorem}[thm]{Theorem}
\newtheorem{proposition}[thm]{Proposition}

\theoremstyle{remark}
\newtheorem{remark}{Remark}

\begin{document}
\title{Undecidability problems for semifree DG algebras}
\author[Ciprian Manolescu]{Ciprian Manolescu}
\address{Department of Mathematics, Stanford University\\
450 Jane Stanford Way, Building 380, Stanford, CA 94305, USA}
\email {cm5@stanford.edu}

\author[Nick Rozenblyum]{Nick Rozenblyum}
\address {Department of Mathematics, University of Toronto\\ 40 St. George Street, Room 6290, 
Toronto, Ontario M5S 2E4, Canada}
\email {nick.rozenblyum@utoronto.ca}

\begin{abstract}
We prove that the stable tame isomorphism, quasi-isomorphism, and derived Morita equivalence problems for semifree noncommutative differential graded algebras (DGAs) are all undecidable. This resolves half of Problem 5.16 from the K3 Problem List in Low-Dimensional Topology. We present two solutions, both obtained (essentially autonomously) by Gemini Deep Think / Aletheia. 
\end {abstract}

\maketitle
\section{Introduction}

Differential graded algebras play an important role in numerous branches of mathematics, including algebraic geometry, representation theory, homotopy theory, symplectic geometry, and low-dimensional topology. 

Some of the DGAs that appear in practice are graded commutative, and some are not. In particular, in contact geometry, the Chekanov-Eliashberg Legendrian contact DGA \cite{Chekanov, EGH} associates to a Legendrian submanifold a noncommutative DGA that is semifree (i.e., its underlying graded algebra is free). A Legendrian isotopy induces a stable tame isomorphism of the Legendrian contact DGAs. 

It is thus a natural question whether we can tell semifree DGAs apart. There are several variants of this question, because there exist different notions of equivalence for DGAs: stable tame isomorphism, quasi-isomorphism, and derived Morita equivalence (ordered here from the strongest notion to the weakest). We refer to Section~\ref{sec:def} for the exact definitions.

Let $R$ be a nontrivial, unital commutative ring. For the decidability questions below to make sense, we assume that $R$ is Turing computable.\footnote{This includes rings such as $\Z$, $\Q$ and $\Z/n$, but excludes uncountable rings such as  $\R$ or $\C$. One can replace $\R$ and $\C$ with their subfields made of computable numbers. Alternatively, one could work in the BSS model of computation \cite{BSS}, where $\R$ and $\C$ are allowed.} 

\begin{theorem}
\label{thm:main}
Let $\A$ and $\B$ be semifree differential graded algebras over $R$. The following questions are undecidable:

(a) whether $\A$ and $\B$ are stable tame isomorphic;

(b) whether $\A$ and $\B$ are quasi-isomorphic;

(c) whether $\A$ and $\B$ are derived Morita equivalent.
\end{theorem}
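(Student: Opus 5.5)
We reduce a known undecidable problem to all three questions at once. The natural source is the triviality problem for finitely presented groups (Adyan--Rabin), or its monoid version: given a finite presentation, it is undecidable whether the group (or monoid) it presents is trivial. To each finite group presentation $P=\langle x_1,\dots,x_n \mid r_1,\dots,r_m\rangle$ we will effectively attach a semifree DGA $\A_P$ over $R$, concentrated in degrees $0$ and $1$. The degree-$0$ generators are $x_i^{\pm1}$, and the degree-$1$ generators $y_i, z_i, w_j$ have differentials
\[
\partial y_i = x_i x_i^{-1}-1, \qquad \partial z_i = x_i^{-1}x_i - 1, \qquad \partial w_j = r_j - 1.
\]
Then $H_0(\A_P)$ is the group ring $R[G_P]$. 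We compare $\A_P$ with the fixed DGA $\B=R$, which has no generators and zero differential. The goal is to show that the three equivalences of Theorem~\ref{thm:main} are all equivalent to the single condition ``$G_P$ is trivial.'' Since stable tame isomorphism implies quasi-isomorphism, which implies derived Morita equivalence, it suffices to prove two things: (i) if $G_P=1$ then $\A_P$ is stable tame isomorphic to $R$; (ii) if $\A_P$ is derived Morita equivalent to $R$ then $G_P=1$.

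For (ii), a derived Morita equivalence preserves invariants of the derived category. We use the Hochschild homology $HH_0$, equivalently the degree-$0$ part, which is a Morita invariant of $H_0$ in low degrees. Any invariant that detects $R[G]$ versus $R$ would do; here $HH_0(R[G])$ is the free $R$-module on the conjugacy classes of $G$. For a nontrivial $G$ this has rank at least $2$, while $HH_*(R)=R$. Since $R$ is nontrivial, ranks are well-defined over a residue field. One subtlety needs checking: $H_0$ of a connective DGA, together with the $HH$ computation, must really depend only on the Morita class. To handle this, we keep $\A_P$ connective and use the fact that $HH_*(\A_P)$ is a derived Morita invariant with a spectral sequence/edge map giving $HH_0(\A_P)=HH_0(H_0\A_P)$.

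Step (i) is the main obstacle. Knowing only that $G_P$ is trivial, we must show that the specific DGA $\A_P$ is stable tame isomorphic to $R$, and mere triviality of $H_0$ gives no such chain-level control. Higher homology is the problem: $H_1(\A_P)$ encodes identities among relations, so $\A_P$ is typically not even quasi-isomorphic to $R$. The fix is to modify the construction so that triviality of $G_P$ is witnessed by a finite certificate that can be turned into tame moves. Concretely, we add the Andrews--Curtis-style freedom by attaching further degree-$1$ and degree-$2$ generators. The plan is to choose $\A_P$ to be the ``full resolution'' version, namely a quasi-free model of $R[G_P]$ completed by stabilizations: if $G_P=1$, a derivation of each $x_i=1$ from the relators yields elements $u_i$ with $\partial u_i = x_i-1$. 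Tame changes of variables $x_i \mapsto x_i+1$-type substitutions together with destabilizations then cancel generator pairs.

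To make the leftover degree-$1$ generators cancel as well, we would replace the input problem by one where the relevant certificate is complete. One option is the undecidability of triviality for \emph{balanced} presentations, combined with the requirement that the presentation complex be contractible, since then $\pi_2=0$ and there are no extra identities. Alternatively, we can sidestep (ii)'s converse issue by comparing $\A_P$ against an explicitly stabilized $\A_{P_0}$ rather than $R$. We would first try to realize $\A_P$ so that each degree-$1$ generator pairs with a degree-$0$ generator in a triangular (tame) fashion once the group is trivial, and verify this via an explicit filtration argument.
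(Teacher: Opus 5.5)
The gap is step (i). With the fixed target $\B=R$ it is not merely unproven but false, so the reduction breaks for all three questions. Your step (ii) is fine, and it is the same $\mathit{HH}_0$ argument the paper uses for $(d)\Rightarrow(e)$ in Proposition~\ref{prop:TFAE2}.

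Here is why (i) fails. Stable tame isomorphism preserves the alternating count $\sum_d(-1)^d\#\{\text{generators of degree } d\}$. This count is $2n-(2n+m)=-m$ for $\A_P$ and $0$ for $R$.

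Worse, take $P=\langle x\mid x\rangle$. Write $\bar x$ for the inverse generator, with $\del y=x\bar x-1$, $\del z=\bar x x-1$, $\del w=x-1$.
\begin{itemize}
\item The element $c=y-z-w\bar x+\bar x w$ is a cycle.
\item It is not a boundary. Consider the map $\A_1\to R^3$, $aub\mapsto\epsilon(a)\epsilon(b)[u]$, where $\epsilon(x)=\epsilon(\bar x)=1$. It kills $\del(\A_2)$ because $\epsilon(\del u)=0$ for every degree-$1$ generator $u$, yet it sends $c$ to $[y]-[z]\neq 0$.
\end{itemize}
So $H_1(\A_P)\neq 0$ even though $G_P=1$, and $\A_P$ is not quasi-isomorphic to $R$. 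Since $H_0(\A_P)=R$, one also checks $\mathit{HH}_1(\A_P)\cong H_1(\A_P)\neq 0=\mathit{HH}_1(R)$, so $\A_P$ is not even derived Morita equivalent to $R$.

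None of your proposed repairs rescues this:
\begin{itemize}
\item This $P$ is balanced with contractible presentation complex, so restricting to balanced presentations changes nothing.
\item A semifree resolution of $R[G_P]$ is in general infinitely generated (e.g.\ when $G_P$ has torsion). You also do not say how to build it effectively from $P$.
\item ``Compare against a stabilized $\A_{P_0}$'' is never specified.
\end{itemize}

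The missing idea is to let the comparison DGA depend on $P$ and carry exactly the same identities among relations as $\A_P$. The two DGAs then differ only in generators that witness triviality, and no control of higher homology is ever needed. You already found the key elements $u_x$ with $\del u_x=x-1$ when $G_P=1$. The point is to use them to shift new generators, not to cancel everything down to $R$.

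Concretely, the paper adjoins to your $\A_P$ new degree-$1$ generators $t_x$, one for each degree-$0$ generator $x$ (these are the paper's $z_x$, not your $z_i$). It sets $\delA t_x=0$ in $\A$ and $\delB t_x=x-1$ in $\B$. Then $H_0(\A)=R[G_P]$ and $H_0(\B)=R$, so your $\mathit{HH}_0$ argument shows that derived Morita equivalence forces $G_P=1$.

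Conversely, if $G_P=1$ then $x-1=\sum_l p_{x,l}f_{j_{x,l}}q_{x,l}$ in the free algebra. Replacing each relator by its degree-$1$ generator gives $u_x$ with $\delA u_x=x-1$ and no $t$'s. The triangular substitution $t_x\mapsto t_x+u_x$ is then a tame isomorphism $\B\to\A$ intertwining the differentials. This even shows that plain tame isomorphism is undecidable.

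The paper's other proof uses Bokut's theorem instead and compares $\del r_j=f_j$ with $\del r_j=1$. When the algebra is trivial, both DGAs are acyclic on the same generators. The classification of acyclic semifree DGAs (Dimitroglou Rizell) then gives stable tame isomorphism, and a compact-generator argument handles the converse.
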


This answers one half of Problem 5.16 from \cite{K3}. The other half consists of the analogous problems for semifree graded commutative DGAs, which remain open.

We give two proofs of Theorem~\ref{thm:main}. The first is based on constructing two DGAs that are equivalent (in any of the above senses) if and only if a certain associative algebra is trivial. We then apply the undecidability of the triviality problem for such algebras. The second solution is based on a similar reduction to another known undecidable problem: the triviality of finitely presented groups.

The first proof is shorter than the second. However, the second has the advantage that it explicitly reduces to the  better-known Adyan-Rabin theorem about groups, and that it proves undecidability of a few other related problems. (See Remarks~\ref{rem:1} and \ref{rem:2}.)

\medskip
\textbf{Note on AI use.} The solutions are based on querying {\em Aletheia}, a math research agent based on Gemini Deep Think \cite{Aletheia}. Specifically, the agent produced the first solution when asked about part (b) of Theorem~\ref{thm:main}, and the second when asked about part (a). It also produced a somewhat more complicated solution to part (c), based on the Adams cobar construction, which we do not discuss here. The raw outputs on which this paper is based can be found at: \url{https://github.com/google-deepmind/superhuman/tree/main/aletheia/}

The contributions of the human authors are as follows. They checked the proofs, noticed that either solution applies just as well to all three parts of the problem (by putting together arguments from different AI responses), and added this observation. They also streamlined the exposition, found and added precise references, and wrote the introduction and the background section.   

 In terms of the classification of AI-generated mathematics proposed in \cite[Table 1]{Aletheia}, we judge this paper to be Level 2 and Essentially Autonomous. In retrospect, the solutions are rather simple and an expert in decidability may well have been able to find them with some thought. Nevertheless, they seem to have eluded topologists.

\medskip
\textbf{Acknowledgements.} We thank Google DeepMind and especially Tony Feng, Junehyuk Jung, Quoc V. Le, and Thang Luong for giving us access to the {\em Aletheia} model, and for interesting conversations. We thank Robert Lipshitz for suggesting this problem, and for helpful comments. CM was partially supported by a Simons Investigator Award, the Simons Collaboration Grant on New Structures in Low-Dimensional Topology, and an NSF AIMing grant (DMS-2522743). NR was partially supported by an NSERC Discovery Grant (RGPIN-2025-0696).

\section{Background}
\label{sec:def}
 A differential graded algebra $\A=\oplus_{i \in \Z} \A_i$ over $R$ is a $\Z$-graded unital algebra together with a differential $\del: \A \to \A$ of degree $-1$ satisfying $\del^2=0$ and the graded Leibniz rule. We say that $\A$ is {\em semifree} if, as a graded algebra, $\A$ is isomorphic to the free, noncommutative $R$-algebra on finitely many homogeneous generators $x_i$, i.e., to the tensor algebra $T(x_1, \dots, x_n)$ on the free $R$-module generated by $x_1, \ldots, x_n$. 
 
An {\em elementary automorphism} of a free tensor algebra $T(x_1, \dots, x_n)$ is one that maps exactly one generator $x_i \mapsto \alpha x_i + P$ (where $\alpha \in R^\times$ and $P$ is a homogeneous polynomial exclusively in the remaining generators) and leaves all other generators fixed. A \emph{tame isomorphism} between tensor algebras $T(x_1, \dots, x_n)$ and $T(y_1, \dots, y_n)$ is  the composition  of some elementary automorphisms of $T(x_1, \dots, x_n)$ followed by the isomorphism sending $x_i \mapsto y_i$ or all $i$. 

A \emph{stabilization} of a semifree DGA $\A$ is obtained from $\A$ by adjoining two free generators $e$ and $f$ of degrees $k+1$ and $k$ (for some $k \in \Z$) such that $\del(e) = f$ and $\del(f) = 0$.

Following \cite{Chekanov, ENS}, two semifree DGAs $\A$ and $\B$ are called {\em tame isomorphic} if they are related by a tame isomorphism (of the underlying tensor algebras) that commutes with the differentials.  They are called \emph{stable tame isomorphic} if they become tame isomorphic after applying finitely many stabilizations to each.

A {\em quasi-isomorphism} between DGAs $\A$ and $\B$ is a DG homomorphism $\A \to \B$ inducing an isomorphism on homology; two DGAs are called {\em quasi-isomorphic} if they are related by a zigzag of quasi-isomorphisms. If two semifree DGAs are stable tame isomorphic, they are quasi-isomorphic; see \cite[Corollary 3.11]{ENS}.

Given a DGA $\A$, we denote by $\D(\A)$ the derived category of left DG modules over $\A$. Two semifree DGAs $\A$ and $\B$ are {\em derived Morita equivalent} if $\D(\A)$ and $\D(\B)$ are equivalent as triangulated categories. Quasi-isomorphism implies derived Morita equivalence. We refer to \cite{Keller1, Keller2} for more on the Morita theory for DGAs.

\section{Reduction to algebras}
\label{sec:one}

For the first proof of Theorem~\ref{thm:main}, we use the following result:
\begin{theorem}[Bokut' \cite{Bokut}]
\label{thm:bokut}
Let $A$ be a finitely presented (noncommutative, unital) algebra. The problem of whether $A=0$ is undecidable.
\end{theorem}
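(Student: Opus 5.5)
The statement is a cited theorem (Bokut'), so the natural plan is a reduction from a known undecidable problem about groups or semigroups, namely the Adyan--Rabin theorem: triviality of a finitely presented group $G=\langle g_1,\dots,g_m \mid r_1,\dots,r_k\rangle$ is undecidable. To each such presentation I would associate, effectively, a finitely presented unital algebra $A_G$ over $R$ such that $A_G=0$ if and only if $G$ is trivial. A decision procedure for $A=0$ would then decide group triviality, which is a contradiction.

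The first candidate for $A_G$ is the group algebra $R[G]$ itself, presented with generators $g_i, h_i$ and relations $g_ih_i=h_ig_i=1$ together with $r_j=1$. This is nonzero whenever $R\neq 0$, since the augmentation $R[G]\to R$ is surjective, so it does not work directly; I need a modification that collapses exactly when $G$ is trivial.

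The construction I would try kills the augmentation by adjoining a relation forcing $1$ into the augmentation ideal in a way that is only consistent when $G$ is nontrivial. A cleaner route is to use a fixed element whose triviality is undecidable: by Adyan--Rabin style arguments (or Markov properties), it is undecidable whether a given generator $g_1$ equals $1$ in $G$. Set
\[
A_{G}=R[G]\big/\big(g_1-1 = e\big)
\]
in a form such as: adjoin a generator $t$ with relations $t(g_1-1)=1$ and $(g_1-1)t=1$, making $g_1-1$ invertible. If $g_1=1$ in $G$, then $0$ is invertible, so $A_G=0$. If $g_1\neq1$, I need $A_G\neq0$, that is, I need a ring map from $R[G]$ to some nonzero ring in which $g_1-1$ becomes invertible. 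This is exactly the main obstacle. I would handle it by choosing the group family so that $g_1$, when nontrivial, has infinite order and generates a retract-like piece. Then I can map to a representation where $g_1$ acts as an invertible operator with no eigenvalue $1$, for instance via a homomorphism $G\to\Z$ with $g_1\mapsto$ a generator, composed with $R[\Z]\to R[x^{\pm1}]$ localized at $x-1$, or with $R[x^{\pm1}]\to R[x^{\pm1}][(x-1)^{-1}]\neq0$.

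To guarantee such a homomorphism I would arrange the reduction accordingly. Starting from an arbitrary presentation $G$, form $H=G*\langle s\rangle$ and a group in which a designated element is trivial iff $G$ is trivial, and otherwise maps onto $\Z$. If that bookkeeping becomes awkward, my fallback is to reduce instead from the word problem in finitely presented semigroups or monoids (Markov--Post). Given a monoid presentation and words $u,v$, build the algebra with the monoid relations, plus a zero element $z$ with $zx=xz=z$ for all generators, plus the relation $z=u-v$ arranged so that the algebra collapses iff $u=v$. Nonvanishing in the other case would then be verified through a normal form (Gröbner--Shirshov basis) argument, which is Bokut's actual technique.
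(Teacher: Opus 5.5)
The paper gives no proof of this statement (it only cites Bokut'), so your proposal has to stand on its own. It breaks at exactly the step you flag as the main obstacle.

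First, making $g_1-1$ two-sided invertible is the wrong relation. It can collapse the algebra even when $g_1\neq 1$: over $R=\mathbb{F}_p$ with $g_1$ of order $p$, we have $(g_1-1)^p=g_1^p-1=0$, and a nilpotent unit forces $1=0$.

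More seriously, the nonvanishing witness you want cannot come out of any effective reduction. Suppose $G\mapsto (H,h)$ were computable, with $h=1$ when $G\cong 1$ and $h$ mapping to a generator under some $H\to\Z$ otherwise. Then you could decide triviality of $G$: compute $H^{\mathrm{ab}}$ from the finite presentation (Smith normal form) and test whether the image of $h$ has infinite order. That contradicts Adyan--Rabin. The same obstruction applies to any certificate of $A\neq 0$ that factors through a commutative ring. On the instances of such a reduction, $A\neq 0$ if and only if its commutativization is nonzero, and over a computable field that is decidable by Buchberger's algorithm. So the witness must be genuinely noncommutative.

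Your fallback does not supply one. With $z$ absorbing and $z=u-v$, the case $u=v$ only gives $z=0$, which does not force $1=0$. The Gr\"obner--Shirshov nonvanishing argument is named rather than carried out.

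The missing idea is to impose only $s\,w\,t=1$ for two new generators $s,t$, instead of inverting $w$. This relation is realizable by endomorphisms of a free module of countable rank as soon as $w$ spans a free direct summand. Concretely:
\begin{enumerate}[(1)]
\item Let $M=\langle X\mid a_i=b_i\rangle$ be a finitely presented monoid with undecidable word problem (Markov--Post), so that $R\langle X\rangle/(a_i-b_i)\cong R[M]$. For words $u,v$, set $A_{u,v}=R\langle X,s,t\rangle/(a_i-b_i,\ s(u-v)t-1)$.
\item If $u=v$ in $M$, then $1=s\cdot 0\cdot t=0$.
\item If $u\neq v$, let $V$ be the free $R$-module with basis $e_{m,n}$, $(m,n)\in M\times\mathbb{N}$, with $M$ acting by $m'e_{m,n}=e_{m'm,n}$. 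Fix a bijection $\phi\colon M\times\mathbb{N}\to\mathbb{N}$. Put $t(e_{m,n})=e_{1,\phi(m,n)}$, $s(e_{u,k})=e_{\phi^{-1}(k)}$, and $s(e_{m',k})=0$ for $m'\neq u$.
\item Then $s(u-v)t(e_{m,n})=e_{m,n}$, so there is a unital map $A_{u,v}\to\mathrm{End}_R(V)\neq 0$, and hence $A_{u,v}\neq 0$.
\end{enumerate}
This works over every nonzero $R$. The same construction with $w=g-1$ in a finitely presented group with unsolvable word problem also works, so Novikov--Boone suffices and no control over homomorphisms to $\Z$ is needed.
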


In other words, if $$S=\{f_1, \dots, f_m\} \subset R\langle x_1, \dots, x_n \rangle$$ is a finite set of non-commutative polynomials, then it is undecidable whether it generates the unit ideal, i.e., whether $1 \in I := ( f_1, \dots, f_m )$. Indeed, this would correspond to the algebra $$A=R\langle x_1, \dots, x_n \rangle/I$$ being trivial.

Starting from a set $S$ as above (where we may as well assume $n, m \geq 1$) we construct a semifree non-commutative DGA $\A$ equipped with the following generators:
\begin{itemize}
    \item Degree $0$ generators: $x_1, \dots, x_n$ with differential $\delA(x_i) = 0$.
    \item Degree $1$ generators: $r_1, \dots, r_m$ with differential $\delA(r_j) = f_j$.
\end{itemize}
 The degree $0$ module $\A_0$ is the free algebra $R\langle x_1, \dots, x_n \rangle$, and the image of the differential $\delA: \A_1 \to \A_0$ is the two-sided ideal $I$. Consequently, the $0$-th homology is given by $$H_0(\A) = \A_0 / I = A.$$

Next, let $\B$ be the semifree non-commutative DGA with the same generators $x_i$ and $r_j$ as before, with differential
$$\delB(x_i)=0,\ \ \ i=1, \dots, n; $$
$$ \delB(r_j) = 1, \ \ \ j=1, \dots, m.$$
Because $1$ is a boundary, we deduce that $1=0$ in the ring $H_*(\B)$, so $\B$ is acyclic: $H_*(\B)=0$. 

\begin{proposition}
\label{prop:TFAE1}
For the algebras $\A$ and $\B$ described above, the following are equivalent:
\begin{enumerate}[(a)]

\item  $\A$ and $\B$ are stable tame isomorphic;

\item   $\A$ and $\B$ are quasi-isomorphic;

\item  $\A$ and $\B$ are derived Morita equivalent;

\item The algebra $A$ is trivial: $A =0.$
\end{enumerate}
\end{proposition}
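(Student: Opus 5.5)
The chain of implications (a) $\Rightarrow$ (b) $\Rightarrow$ (c) holds in general, by the facts recalled in Section~\ref{sec:def}: stable tame isomorphism implies quasi-isomorphism \cite[Corollary 3.11]{ENS}, and quasi-isomorphism implies derived Morita equivalence. It therefore suffices to prove (c) $\Rightarrow$ (d) and (d) $\Rightarrow$ (a).

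For (c) $\Rightarrow$ (d), I would use that $\B$ is acyclic, so $\D(\B)$ is the zero category: every DG module $M$ over $\B$ is a module over $H_*(\B)=0$ at the homology level, and hence is acyclic. Explicitly, $1=\delB(r_1)$ gives $m=\del(r_1 m)\pm r_1\del m$, so every cycle is a boundary. If $\D(\A)\simeq\D(\B)=0$, then the free module $\A$ is zero in $\D(\A)$, meaning $H_*(\A)=0$. In particular $A=H_0(\A)=0$.

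For (d) $\Rightarrow$ (a), assume $1\in I$, say $1=\sum_k a_k f_{j_k} b_k$ with $a_k,b_k\in R\langle x\rangle$. The plan is to build a tame automorphism of the common tensor algebra $T(x_1,\dots,x_n,r_1,\dots,r_m)$ carrying $\delA$ to $\delB$, possibly after stabilizing. The main obstacle is that a degree-$1$ generator cannot simply be rescaled, because $\delA(r_j)=f_j$ is not a unit. My first approach is to stabilize: adjoin a pair $e,f$ with $|e|=1$, $|f|=0$, $\del e=f$ (on both sides, or use a suitable $k$). Then set $s=\sum_k a_k r_{j_k} b_k$, which lies in degree $1$ and satisfies $\delA s=1$. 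The elementary automorphism $r_j\mapsto r_j + (1-f_j)s$ is not allowed, since it involves $r_j$ itself on the right-hand side.

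Instead I would argue via the $\delA$-acyclicity of $\A$ in general: once $\delA$ has an element $s$ with $\delA s=1$, any degree-$1$ generator $r_j$ can be changed by $r_j\mapsto r_j+u$ with $u$ built from the other generators. To avoid self-reference, first stabilize by a pair $(e,f)$ of degrees $(1,0)$ in $\A$ and in $\B$. Then in $\A$, apply elementary automorphisms sending $e\mapsto e + (\text{stuff})$ to arrange a new degree-$1$ generator $e'$ with $\del e'=1$. This requires expressing $1$ through terms not involving $e$, which is possible using $s$ built from the $r$'s; for example $e\mapsto e+ s - sf$ gives $\del = f+1-f-(\ldots)$, and the Leibniz terms must be tracked. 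Once some generator $t$ has $\del t=1$, each other generator $g$ of degree $d$ with $\del g=\phi$ is replaced by $g\mapsto g - t\phi$ plus corrections, making $\del g$ equal to $1$ for the $r_j$ and $0$ for the $x_i$. Matching generators then gives a tame isomorphism to the stabilized $\B$. The delicate step is this Leibniz bookkeeping, which ensures that each substitution is elementary, i.e.\ does not involve the generator being modified.
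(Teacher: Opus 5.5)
Your arguments for (a)$\Rightarrow$(b)$\Rightarrow$(c) and (c)$\Rightarrow$(d) are essentially the paper's. $\D(\B)=0$ because $\B$ is acyclic; your identity $m=\del(r_1m)$ for cycles $m$ makes this explicit. Hence $\A\cong 0$ in $\D(\A)$, and $A=H_0(\A)=0$.

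For (d)$\Rightarrow$(a) you take a genuinely different route. The paper notes that $\A$ is then acyclic and cites \cite[Corollary 2.2]{DR}: acyclic semifree DGAs with the same generators are stable tame isomorphic. You instead build the tame isomorphism by hand, after one stabilization $(e,f)$ with $|e|=1$, $|f|=0$ on each side.

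This works, and more cleanly than you fear. Since $\del f=0$ and $\del f_j=0$, there are no extra Leibniz terms. With $s=\sum_k a_k r_{j_k} b_k$, first replace $e$ by $t=e+s-sf$, so $\del t=f+1-f=1$ exactly. Then replace each $r_j$ by $r_j'=r_j+t-tf_j$, so $\del r_j'=1$. Each move is elementary with respect to the current generating set, though not with respect to the original one, since $t$ contains $r$'s. Successive changes of generators of this kind compose to a tame automorphism.

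One loose end remains. ``Matching generators'' does not yet give an isomorphism with the stabilized $\B$, because $\del t=1$ while there $\del e=f$. You need one more elementary move, for example $t\mapsto t-r_1'+r_1'f$, whose differential is $f$. Alternatively, normalize the stabilized $\B$ by $e\mapsto e+r_1-r_1f$ and compose with the inverse. Then the map $x_i\mapsto x_i$, $r_j\mapsto r_j'$, $e\mapsto t-r_1'+r_1'f$, $f\mapsto f$ is a tame isomorphism intertwining the differentials.

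Your route gives a self-contained, explicit argument showing that a single stabilization suffices. It is in the same spirit as the explicit $w_x$ construction in the paper's Section~\ref{sec:two}. The paper's citation is shorter and covers arbitrary acyclic semifree DGAs, but it leaves the key step as a black box.
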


\begin{proof}
As mentioned in Section~\ref{sec:def}, we have $(a) \Rightarrow (b) \Rightarrow (c).$ It suffices to prove $(c) \Rightarrow (d)$ and $(d) \Rightarrow (a).$

$(c) \Rightarrow (d)$: Since $\B$ is acyclic, it is quasi-isomorphic (and hence derived Morita equivalent) to $0$. Thus, the derived category $\mathcal{D}(\B)$ is equivalent to the zero category. Since we assumed (c), we deduce that $\D(\A)$ is also equivalent to the zero category. The DGA $\A$ (viewed as a free module over itself) is a compact generator for $\mathcal{D}(\mathcal{A})$ (see \cite{Keller1}), with natural abelian group isomorphisms $$\operatorname{Hom}_{\mathcal{D}(\mathcal{A})}(\mathcal{A}, \mathcal{A}) \cong H_0(\mathcal{A}).$$ Therefore, we must have that  $A=H_0(\A)$ is trivial.

$(d) \Rightarrow (a)$:  Since $H_0(\A)=0$, we have $1=0$ in $H_*(\A)$, so $\A$ is acyclic. By \cite[Corollary 2.2]{DR}, because $\A$ and $\B$ are acyclic and have the same number of generators and relations, they are stable tame isomorphic.
\end{proof}

 Theorem~\ref{thm:main} follows by combining Theorem~\ref{thm:bokut} with Proposition~\ref{prop:TFAE1}.

\section{Reduction to groups}
\label{sec:two}

For the second proof we use the following result, which is arguably better known than Theorem~\ref{thm:bokut}: 

\begin{theorem}[Adyan--Rabin \cite{Adyan, Rabin}]
\label{thm:AR}
Let $G$ be a finitely presented group. The question of whether $G$ is trivial is undecidable.
\end{theorem}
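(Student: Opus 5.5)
We would reduce from the word problem. By the Novikov--Boone theorem there is a finitely presented group $H=\langle h_1,\dots,h_k \mid \rho_1,\dots,\rho_\ell\rangle$ with unsolvable word problem. The plan is to give an algorithm that takes any word $w$ in the $h_i^{\pm1}$ and outputs a finite presentation of a group $G_w$ with two properties:
\begin{itemize}
\item[(i)] if $w=1$ in $H$, then $G_w$ is trivial;
\item[(ii)] if $w\neq 1$ in $H$, then $H$ embeds in $G_w$; since $H\neq 1$, $G_w$ is then nontrivial.
\end{itemize}
A decision procedure for triviality of finitely presented groups would then decide the word problem in $H$, which is impossible. This is Rabin's strategy.

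\textbf{Construction of $G_w$.} Start with $K=H*\langle x\rangle$. When $w\neq 1$, the element $w$ has infinite order in the free factor. In that case the elements $x$ and $y:=[w,x]$ generate a free subgroup of rank two in $K$; we would check this by a normal-form argument in the free product. We then adjoin stable letters by successive HNN extensions and amalgamated products, chosen so that:
\begin{itemize}
\item[(1)] Every generator $h_i$ of $H$ is identified with a fixed word $u_i(x,y)$, via an amalgamation with a free group along a free subgroup of rank two. We choose the $u_i$ so that, together with $x,y$, they generate a free subgroup freely.
\item[(2)] Relations of Baumslag--Solitar type, for instance $t^{-1}xt=x^2$ and $s^{-1}ts=t^2$ with new letters $s,t$, together with one relation tying $x$ to $y$, force $x=1$ as soon as $y$ is central or trivial.
\end{itemize}
The number of generators and relations added is fixed, and the words depend computably on $w$. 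So $w\mapsto G_w$ is computable.

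\textbf{Collapse when $w=1$.} This is a short computation. Here $y=[w,x]=1$, so each $h_i=u_i(x,1)$ is a power of $x$. The Baumslag--Solitar relations then kill $x$, then $t$, then $s$. Hence every generator is trivial and $G_w=1$.

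\textbf{Embedding when $w\neq 1$: the main obstacle.} Here we must verify that each HNN extension or amalgamation is legitimate. This means checking that the associated subgroups are actually isomorphic via the prescribed map; it reduces to the freeness of $\langle x,y\rangle$ and of $\langle x,u_1,\dots,u_k\rangle$ in the relevant groups. Once this holds, the Higman--Neumann--Neumann embedding theorem and Britton's lemma show that the base group, and hence $H$, injects into $G_w$. So $G_w\neq 1$.

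I expect the delicate part to be choosing the words $u_i$ and the auxiliary relations so that the freeness hypotheses hold at every stage, while the collapse in the case $w=1$ remains immediate. I would first try the standard choices $u_i=y^{-i}xy^{i}$ (or similar conjugates), which are freely independent in any free group on $x,y$. I would then verify the remaining conditions with Britton's lemma, stage by stage.
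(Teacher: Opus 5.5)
The paper does not prove this theorem; it quotes it from Adyan and Rabin. Your overall strategy is Rabin's reduction from the word problem: computably produce $G_w$ with $G_w=1$ if $w=1$ and $H\hookrightarrow G_w$ if $w\neq 1$. That strategy is correct, but the construction you sketch cannot realize it, and no choice of the words $u_i$ will repair it.

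\textbf{Step (1) is not an amalgamation.} Let $\rho\colon K=H*\langle x\rangle\to H$ be the retraction killing $x$. Then $\rho(y)=\rho([w,x])=1$, so $\rho(u_i(x,y))=1$ while $\rho(h_i)=h_i$. Hence $h_i\neq u_i(x,y)$ in $K$ whenever $h_i\neq 1$ in $H$, which happens for some $i$ because $H\neq 1$. Amalgamated products and HNN extensions over a base containing $K$ contain $K$ as a subgroup, so they never make two distinct elements of $K$ equal. Imposing $h_i=u_i(x,y)$ therefore passes to a proper quotient of $K$, and Britton's lemma says nothing about whether $H$ survives.

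In fact (ii) is impossible for this design. Suppose $H$ embedded in $G_w$ with $h_i=u_i(x,y)$ and $\langle x,y\rangle$ free there, which is the freeness you planned to verify. Then $H$ would be a finitely generated subgroup of a free group, hence free with solvable word problem, contradicting the choice of $H$.

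The missing idea is that $H$ may enter an amalgamated subgroup only through infinite-order elements lying outside $H$, such as $h_ix$. For example, when $w\neq 1$, the elements $x$, $a^{-1}[w,x]a$ and $a^{-1-i}(h_ix)a^{1+i}$ freely generate a free subgroup of $H*\langle x,a\rangle$. This is because they are conjugates, by pairwise distinct powers of the free letter $a$, of elements of infinite order in $H*\langle x\rangle$. One then amalgamates with a free subgroup of an auxiliary free group, choosing the partner basis so that $[w,x]=1$ forces $x=1$, and $x=1$ forces every other generator, including the new letters, to be trivial. This is what the standard construction (e.g.\ in Miller's survey on decision problems) does.

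\textbf{The collapse for $w=1$ also fails as written.} Setting $x=1$ makes $t^{-1}xt=x^2$ vacuous, so nothing forces $t=1$. With your relations the implications run the other way: $s=1\Rightarrow t=1\Rightarrow x=1$. Moreover $s$ has exponent sum $0$ in $s^{-1}tst^{-2}$. So unless your unspecified relation tying $x$ to $y$ involves $s$ with nonzero exponent sum, $G_w$ surjects onto $\Z$ via $s\mapsto 1$ and all other generators $\mapsto 0$, and is never trivial. The auxiliary relations must be such that setting the collapsing letter to $1$ leaves a nontrivial word in the next letter. For instance, $a^{-1}ba=c^{-1}b^{-1}cbc$ yields $c=1$ once $b=1$.

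A minor slip: $w\neq 1$ need not have infinite order, since $H$ may have torsion. This is harmless. The subgroup $\langle x,y\rangle$ is generated by $x$ and a conjugate of $x$ by $w^{\pm1}$, so it is free of rank two for every $w\neq1$ by the normal form theorem.
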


That is, there exists no general algorithm to decide whether an arbitrary finitely presented group $G = \langle g_1, \dots, g_n \mid r_1, \dots, r_m \rangle$ is isomorphic to the trivial group $\{1\}$. 

Starting from such a group presentation, we construct a pair of semifree DGAs $\A$ and $\B$. Let $X = \{g_1, \dots, g_n, g'_1, \dots, g'_n\}$ be a set of $2n$ formal generators, where $g'_i$ acts as the formal inverse of $g_i$. We define elements $f_1, \dots, f_N$ (where $N = m + 2n$) in the free non-commutative algebra $R\langle X \rangle$ corresponding to the defining relations of $G$:
\begin{itemize}
    \item $r_j - 1$ for $j = 1, \dots, m$ (with any occurrence of $g_i^{-1}$ replaced by $g'_i$),
    \item $g_i g'_i - 1$ and $g'_i g_i - 1$ for $i = 1, \dots, n$.
\end{itemize}

Both $\A$ and $\B$ are defined as semifree non-commutative DGAs over $R$ on the same finite set of homogeneous generators:
\begin{itemize}
    \item {Degree 0:} The $2n$ generators $x \in X$.
    \item {Degree 1:} Some $N$ generators $y_j \in Y = \{y_1, \dots, y_N\}$ corresponding to the relations $f_j$.
    \item {Degree 1:} Some $2n$ generators $z_x \in Z = \{z_x \mid x \in X\}$ associated to the elements of $X$.
\end{itemize}

Extended via the graded Leibniz rule, we define the differential $\delA$ for $\A$ on the generators as follows:
\begin{align*}
    \delA(x) &= 0 \quad \text{for all } x \in X, \\
    \delA(y_j) &= f_j \quad \text{for all } j = 1, \dots, N, \\
    \delA(z_x) &= 0 \quad \text{for all } z_x \in Z.
\end{align*}

The differential $\delB$ for $\B$ is defined the same way on $X$ and $Y$, but differently on $Z$:
\begin{align*}
    \delB(x) &= 0 \quad \text{for all } x \in X, \\
    \delB(y_j) &= f_j \quad \text{for all } j = 1, \dots, N, \\
    \delB(z_x) &= x - 1 \quad \text{for all } z_x \in Z.
\end{align*}

\begin{proposition}
\label{prop:TFAE2}
For the algebras $\A$ and $\B$ described above, the following are equivalent:
\begin{enumerate}[(a)]
\item $\A$ and $\B$ are  tame isomorphic;

\item  $\A$ and $\B$ are stable tame isomorphic;

\item   $\A$ and $\B$ are quasi-isomorphic;

\item  $\A$ and $\B$ are derived Morita equivalent;

\item The group $G$ is trivial: $G \cong 1.$
\end{enumerate}
\end{proposition}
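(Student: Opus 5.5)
The chain $(a)\Rightarrow(b)\Rightarrow(c)\Rightarrow(d)$ is formal: a tame isomorphism is a stable tame isomorphism with zero stabilizations, and the remaining implications were recorded in Section~\ref{sec:def}. So the plan is to prove $(d)\Rightarrow(e)$ and $(e)\Rightarrow(a)$.

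For $(d)\Rightarrow(e)$, I would first compute $H_0$ of both algebras. In each case $\A_0=\B_0=R\langle X\rangle$, and the image of the degree-one part under the differential is the two-sided ideal generated by the degree-one boundaries. For $\A$ this ideal is generated by the $f_j$, so
\[
H_0(\A)=R\langle X\rangle/(f_1,\dots,f_N)\cong R[G],
\]
the group ring. This holds because imposing $g_ig_i'=g_i'g_i=1$ makes the $g_i$ invertible, and then $r_j=1$ gives the group relations. For $\B$ we also kill $x-1$ for every $x\in X$, so $H_0(\B)\cong R[G]/(g-1)\cong R$, the trivial-module quotient.

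The invariant I would use is $H_0$ up to Morita equivalence. If $\D(\A)\simeq\D(\B)$ as triangulated categories, I want to conclude that $R[G]$ and $R$ are Morita equivalent. This step is the main obstacle, since a bare triangulated equivalence does not obviously preserve $H_0$. I would first try the following route. Both DGAs are connective (concentrated in nonnegative degrees), so $\D$ carries a standard t-structure whose heart is the category of $H_0$-modules. However, a triangulated equivalence need not respect t-structures.

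A cleaner route is to find an invariant that is visibly preserved. Note that $H_*(\B)$ contains $H_0(\B)=R$, and $\B$ is a free DGA with its higher homology to be controlled. Alternatively, I would use the fact that derived Morita equivalence preserves the center of the category, or Hochschild homology $HH_0$. From $H_0$ of Hochschild homology one would like to recover $R[G]/[R[G],R[G]]$, which is free on the conjugacy classes of $G$, versus $R$. But $HH_0$ of a DGA is not simply $HH_0(H_0)$ in general; for connective DGAs, however, $HH_0(\A)=HH_0(H_0(\A))$ holds by a right-exactness argument. Hence $HH_0(\A)\cong R[\mathrm{conj}(G)]$ and $HH_0(\B)\cong R$. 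Since $R$ is nontrivial, comparing ranks forces $G$ to have exactly one conjugacy class, so $G=1$. I would need to justify that triangulated equivalence gives an isomorphism of $HH_0$. If that is too delicate, I would fall back on the Keller-style fact that a derived equivalence of connective DGAs is standard, and hence preserves $H_0$ up to Morita equivalence; the center of $R[G]$ then plays the role of the comparison, since Morita equivalence preserves centers and $Z(R[G])$ is free on the finite conjugacy classes.

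For $(e)\Rightarrow(a)$, assume $G=1$. Then each $x-1$ lies in the ideal $I=(f_1,\dots,f_N)$. Write $x-1=\sum_j a_{j}f_jb_{j}$, which exists (and is even computable, though we don't need that). Define $w_x=\sum_j a_jy_jb_j$, so that $\delA(w_x)=x-1$. Now consider the elementary automorphism $z_x\mapsto z_x+w_x$. It is elementary because $w_x$ involves only the $X$ and $Y$ generators and no $Z$. It intertwines the differentials: $\delA(z_x+w_x)=x-1=\delB(z_x)$, while the generators in $X$ and $Y$ are fixed and have the same differentials in $\A$ and $\B$. Composing these elementary automorphisms over all $x\in X$ gives a tame isomorphism $\B\to\A$ that commutes with the differentials.
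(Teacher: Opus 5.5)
Your main line is the paper's proof: you compute $H_0(\A)\cong R[G]$ and $H_0(\B)\cong R$, use invariance of Hochschild homology under derived Morita equivalence (the paper simply cites Keller for this) together with $\mathit{HH}_0=H_0/[H_0,H_0]$ for connective DGAs to force a single conjugacy class, and for $(e)\Rightarrow(a)$ you apply the elementary automorphisms $z_x\mapsto z_x+w_x$. You should discard your fallback, though, because it fails twice: a derived equivalence of connective DGAs need not make the $H_0$'s Morita equivalent (tilting between ordinary algebras already breaks this), and centers cannot detect triviality of $G$, since $Z(R[G])=R$ whenever every nontrivial conjugacy class of $G$ is infinite (e.g.\ $G$ free of rank $2$).
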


\begin{proof} We have $(a) \Rightarrow (b) \Rightarrow (c) \Rightarrow (d).$ It suffices to prove $(d) \Rightarrow (e)$ and $(e) \Rightarrow (a).$

 $(d) \Rightarrow (e)$: Let us compute the $0$-th homology $H_0$ for both DGAs. Because there are no generators in negative degrees, the $0$-cycles consist of the entire free algebra $R\langle X \rangle$. The $0$-th homology is therefore obtained by taking the quotient of this algebra by the two-sided ideal generated by the boundaries of the degree 1 elements.

For $\A$, the boundaries are generated exactly by the relation polynomials $f_j$. Hence, 
\[
H_0(\A) = R\langle X \rangle / \langle f_1, \dots, f_N \rangle \cong R[G],
\]
the group ring of $G$ over $R$.

For $\B$, the boundaries are generated by the ideal $\langle f_1, \dots, f_N, \{x - 1\}_{x \in X} \rangle$. Setting $x = 1$ for all generators  collapses all group elements to the identity, which automatically satisfies the relations $f_j = 1 - 1 = 0$. The algebra therefore retracts to the base ring: 
\[
H_0(\B) = R\langle X \rangle / \langle f_1, \dots, f_N, \{x - 1\}_{x \in X} \rangle \cong R.
\]

If two DGAs are derived Morita equivalent, their Hochschild homologies are isomorphic (see \cite{Keller2}). For any DGA supported in non-negative homological degrees, $\mathit{HH}_0$ is canonically $H_0 / [H_0, H_0]$.

For $\B$, since $H_0(\B) \cong R$, we have $\mathit{HH}_0(\B) \cong R$.

For $\A$, we have $$\mathit{HH}_0(\A) \cong R[G] / [R[G], R[G]] \cong \bigoplus_{[g] \in \text{Conj}(G)} R,$$ which is the free $R$-module generated by the conjugacy classes of $G$. 

Since $\A$ and $\B$ are derived Morita equivalent, the isomorphism $\mathit{HH}_0(\A) \cong \mathit{HH}_0(\B)$ forces the number of conjugacy classes to be exactly $1$. Because the identity element always forms its own conjugacy class, this  implies $G \cong 1$.

$(e) \Rightarrow (a)$: If $G \cong 1$, then in the algebra $R\langle X \rangle / \langle f_1, \dots, f_N \rangle$, every generator $x \in X$ evaluates to $1$. This means the element $x - 1$  resides inside the two-sided ideal generated by $\{f_j\}_{j=1}^N$ in $R\langle X \rangle$. We can therefore express each $x - 1$ as a finite sum:
\[
x - 1 = \sum_{l} p_{x,l} f_{j_{x,l}} q_{x,l}
\]
where $p_{x,l}, q_{x,l} \in \A_0$ are polynomials over the generators in $X$.

We define elements $w_x \in \A_1$ by substituting the generators $y_{j_{x,l}} \in Y$ for the formal relations $f_{j_{x,l}}$:
\[
w_x = \sum_{l} p_{x,l} y_{j_{x,l}} q_{x,l}.
\]
By the graded Leibniz rule, we obtain:
\[
\delA(w_x) = \sum_{l} p_{x,l} \delA(y_{j_{x,l}}) q_{x,l} = \sum_{l} p_{x,l} f_{j_{x,l}} q_{x,l} = x - 1.
\]
Note that $w_x$ is constructed exclusively from the generators $X$ and $Y$; it does not contain any variables from $Z$.

We now define a graded algebra homomorphism $\Phi: \B \to \A$ on the free generators by:
\begin{align*}
    \Phi(x) &= x \quad \text{for all } x \in X, \\
    \Phi(y_j) &= y_j \quad \text{for all } j = 1, \dots, N, \\
    \Phi(z_x) &= z_x + w_x \quad \text{for all } z_x \in Z.
\end{align*}
Because $w_x$ is independent of the $Z$-variables, $\Phi$ factors into a finite sequence of elementary automorphisms (shifting one $z_x$ at a time), making it a tame isomorphism of the underlying free graded algebras. 

We verify that $\Phi$ intertwines with the differentials. On $z_x \in Z$, we have:
\[
\delA(\Phi(z_x)) = \delA(z_x + w_x) = 0 + (x - 1) = x - 1.
\]
This  matches $\Phi(\delB(z_x)) = \Phi(x - 1) = x - 1$. Since $\Phi$ acts as the identity on $X$ and $Y$ (where the differentials of both DGAs agree), $\Phi$ is a tame isomorphism. 
\end{proof}

 Theorem~\ref{thm:main} follows by combining Theorem~\ref{thm:AR} with Proposition~\ref{prop:TFAE2}. 
 
 \begin{remark}
 \label{rem:1}
 The argument in this second proof also shows that the tame isomorphism problem for semifree DGAs is undecidable.
 \end{remark}
 
 \begin{remark}
 \label{rem:2}
 An {\em augmentation} of a DGA $\A$ is a DG map $\epsilon: \A \to R$ (such that $\epsilon(1)=1$). One can ask about the decidability of the three kinds of equivalence questions for augmented DGAs. The proof of Theorem~\ref{thm:main} in Section~\ref{sec:one} does not immediately apply because it involves the zero DGA, which cannot be augmented. The proof in this section does apply though, because  the tame isomorphism in $(e)\Rightarrow (a)$ in Proposition~\ref{prop:TFAE2}  is compatible with the natural augmentations of $\A$ and $\B$ (which send all the $x$’s to $1$ and the other generators to $0$).  
 \end{remark}

\bibliography{biblio}
\bibliographystyle{custom}

\end{document}